\documentclass[11pt]{article}
\usepackage[T1]{fontenc}
\usepackage[utf8]{inputenc}
\usepackage{lmodern}
\usepackage{geometry}
\usepackage{amsmath,amssymb,amsthm}
\usepackage{booktabs}
\usepackage{array}
\usepackage{adjustbox}
\usepackage{hyperref}
\usepackage{microtype}
\usepackage{enumitem}
\hypersetup{colorlinks=true,linkcolor=blue,citecolor=blue,urlcolor=blue}
\setlist[enumerate]{itemsep=0.2em,topsep=0.4em}
\newcommand{\Rect}{\mathrm{Rect}}
\newcommand{\Att}{\mathrm{Att}}
\newcommand{\Corr}{\mathrm{Corr}}

\theoremstyle{plain}
\newtheorem{proposition}{Proposition}[section]
\newtheorem{lemma}[proposition]{Lemma}
\newtheorem{problem}[proposition]{Problem}
\theoremstyle{definition}
\newtheorem{definition}[proposition]{Definition}
\theoremstyle{remark}
\newtheorem{remark}[proposition]{Remark}
\newtheorem{corollary}[proposition]{Corollary}

\title{Boundary Framework, Rear Morphology, and Rectangular Ears in the Partition Graph}
\author{Fedor B. Lyudogovskiy}
\date{}

\begin{document}
\maketitle

\begin{abstract}
We study the outer geometry of the partition graph $G_n$, focusing on its canonical front-and-side framework, the family of nontrivial rectangular partitions, and the rear structures suggested by the visible global geometry of the graph. We formalize the boundary framework
\[
\mathcal B_n=\mathcal M_n\cup \mathcal L_n\cup \mathcal R_n,
\]
where $\mathcal M_n$ is the main chain and $\mathcal L_n,\mathcal R_n$ are the left and right side edges. We then isolate the nontrivial rectangular family
\[
\Rect^\ast(n)=\{(a^b):ab=n,\ a,b\ge 2\}
\]
as a canonical discrete family marking the rear part of the graph.

We prove that every nontrivial rectangular vertex $\rho=(a^b)$ has degree $2$, has exactly two explicitly described neighbors, and lies in a unique triangle of $G_n$. This leads naturally to the notions of a rectangular ear, its attachment pair, and its support edge. We prove that $\Rect^\ast(n)$ is an independent set in $G_n$, so the weak rectangular contour is not a graph-theoretic chain but rather a discrete rear marker family. We also show that for every genuinely rear rectangular ear, namely for $a,b\ge 3$, its support edge is contained in a tetrahedral configuration of the clique complex $K_n=\mathrm{Cl}(G_n)$.

To organize the interaction between different ears, we introduce support zones, support distances, and support corridors between attachment pairs. This provides a clean language for the study of rear communication through the ambient graph. We also record a natural divisor-theoretic indexing of the rectangular family, in which the trivial divisors correspond to the two antennas $(n)$ and $(1^n)$, and the nontrivial divisors to the rectangular roots. The paper closes with a computational atlas combining a small full-graph layer and a larger local/divisor layer, and with a collection of open problems concerning support-zone connectivity, inter-ear corridors, and the broader problem of canonical rear contours in $G_n$.
\end{abstract}

\noindent\textbf{Keywords.} integer partitions, partition graph, Ferrers diagram, graph morphology, clique complex, rectangular partitions, rear contour, self-conjugate partitions.\par
\smallskip
\noindent\textbf{2020 Mathematics Subject Classification.} Primary 05A17; Secondary 05C75, 05E10.

\section{Introduction}

Let $G_n$ be the partition graph of $n$: its vertices are the integer partitions of $n$, and two partitions are adjacent if one is obtained from the other by a single elementary unit transfer, followed by reordering.

Several complementary aspects of the geometry of $G_n$ were developed in earlier papers of this series. The local transfer geometry, together with the corresponding degree and local simplex data, was analyzed in \cite{Lyudogovskiy-Local}. A subsequent programmatic paper \cite{Lyudogovskiy-Growing} described $G_n$ as a growing discrete geometric object and introduced a broader geometric vocabulary, including the framework, the self-conjugate axis, the central region, the spine, anisotropy, and comparative growth. The axial and simplex layers were then developed further in \cite{Lyudogovskiy-Axial,Lyudogovskiy-Simplex}. For related topological structure of the clique complex, see also \cite{Lyudogovskiy-Homotopy}.

The present paper studies the \emph{outer morphology} of $G_n$. More specifically, we examine the canonical front-and-side framework of the graph, the family of rectangular partitions, and the rear structures suggested by the visible global geometry of the partition graphs. Our aim is to turn the geometric picture of a front framework, side edges, rear appendices, and a rectangular rear profile into a compact combinatorial language that captures part of this outer geometry rigorously.

A central difficulty is that the rear part of $G_n$ is visually prominent but lacks an obvious intrinsic construction. Unlike the main chain, the side edges, or the self-conjugate axis, a ``rear chain'' is not given a priori by a standard intrinsic construction. For this reason, we adopt a restrained approach: when several formalizations are possible, we record them explicitly, isolate the canonical pieces, and clearly distinguish proved results from computational observations and open questions.

The first canonical object considered here is the \emph{boundary framework}
\[
\mathcal B_n=\mathcal M_n\cup \mathcal L_n\cup \mathcal R_n,
\]
formed by the main chain together with the left and right side edges. This object will be viewed as the front-and-side outer skeleton of $G_n$. A second canonical object is the family of nontrivial rectangular partitions
\[
\Rect^\ast(n)=\{(a^b):ab=n,\ a,b\ge 2\},
\]
which provides a natural discrete marker family for the rear part of the graph. This family also carries a natural divisor-theoretic indexing: for each divisor $d\mid n$, the partition $((n/d)^d)$ is rectangular, conjugation interchanges $d$ and $n/d$, the trivial divisors correspond to the two antennas $(n)$ and $(1^n)$, and the nontrivial divisors correspond to the nontrivial rectangular roots. This arithmetic viewpoint will reappear in the larger local atlas. The corresponding weak rectangular contour is simply this set of marker vertices; a one-neighborhood thickening yields a stronger rear candidate better suited for attachment questions.

The main structural point of the paper is that rectangular vertices are not merely visually recognizable. They carry a rigid local graph structure. For every nontrivial rectangular partition $\rho=(a^b)$, we determine its two neighbors explicitly, prove that $\rho$ has degree $2$, and show that it lies in a unique triangle of $G_n$. This leads naturally to the notion of a \emph{rectangular ear}: the rectangular vertex is the terminal vertex of the ear, and the opposite edge is its support edge. We also prove that distinct nontrivial rectangular vertices are never adjacent. Thus the weak rectangular contour is not a graph-theoretic rear chain, but rather a discrete family of rear marker vertices.

A further result concerns the support geometry of these ears. For genuinely rear rectangular vertices, namely those of the form $(a^b)$ with $a,b\ge 3$, we show that the support edge of the associated ear is already contained in a $3$-simplex of the clique complex $K_n=\mathrm{Cl}(G_n)$. In other words, the base of a genuine rear ear is tetrahedrally reinforced. This gives a first rigorous distinction between side behavior and rear behavior in the outer geometry of $G_n$.

These local results naturally lead to the question of how different rectangular ears are connected through the ambient graph. To formalize this, we introduce support zones, support distances, and support corridors between rectangular ears, defined in terms of shortest paths between their attachment pairs after removing the rectangular tips themselves. Within the proved framework, this provides a clean language for inter-ear communication. At the computational level, it leads to an atlas of support zones and rear corridors, where one can compare triangular and tetrahedral behavior along representative connecting routes. In addition to this small full-graph atlas, we also use a larger divisor-based local atlas, which does not require construction of the full graph $G_n$ and serves to display the arithmetic organization and local stability of the rectangular ear family for composite numbers with richer divisor structure.

The paper has two complementary aims. The first is structural: to isolate several canonical objects of the outer morphology of $G_n$ and to prove a compact package of rigorous results about them. The second is organizational: to provide a language in which the front, side, and rear regimes of the partition graph can be compared systematically, in a form compatible with the axial, simplex, and degree stratifications developed in the earlier papers of this series. In particular, the present paper is not intended as a complete theory of graph-theoretic boundary vertices or rear separators. Rather, it is intended as a first formal step toward such a theory.

The main contributions of the paper may be summarized as follows.

\begin{enumerate}
\item We formalize the boundary framework $\mathcal B_n$ as the canonical front-and-side outer skeleton of $G_n$.

\item We isolate the family $\Rect^\ast(n)$ of nontrivial rectangular partitions as a canonical rear marker family and compare weak and strong rear-contour constructions built from it.

\item We prove that every nontrivial rectangular vertex has degree $2$, belongs to a unique triangle, and lies in the degree layer $D_2(n)$ and the simplex layer $L_2(n)$.

\item We introduce rectangular ears, attachment pairs, and support edges, and prove that $\Rect^\ast(n)$ is an independent set in $G_n$.

\item We show that every genuinely rear rectangular ear has a support edge contained in a tetrahedral configuration of the clique complex.

\item We introduce support zones, support distances, and support corridors between rectangular ears as organizing tools for the rear geometry.

\item We locate the rectangular family relative to the framework, the axis, the simplex layers, and the degree layers, thereby linking the outer morphology to the established internal stratifications of the partition graph.
\end{enumerate}

The paper is organized as follows. Section~\ref{sec:framework} introduces the boundary framework, the rectangular family, and the basic rear-contour candidates. Section~\ref{sec:rectangular-ears} establishes the local structure of rectangular vertices, defines rectangular ears and support edges, and proves the tetrahedral support result for genuine rear ears. Section~\ref{sec:support-corridors} introduces support zones, support distances, and support corridors. Section~\ref{sec:atlas} presents a computational atlas combining a small full-graph layer with a larger local/divisor layer. The final section summarizes the picture obtained here and formulates several open problems concerning rear contours, support connectivity, and the broader outer geometry of $G_n$.

\section{Boundary framework and outer reference families}
\label{sec:framework}

We work with the partition graph $G_n$, whose vertices are the integer partitions of $n$, and where adjacency is given by a single elementary unit transfer followed by reordering. All standard notation from the earlier papers in this series is assumed; see in particular \cite{Lyudogovskiy-Local,Lyudogovskiy-Growing,Lyudogovskiy-Axial,Lyudogovskiy-Simplex}. For general background on integer partitions and Ferrers diagrams, see \cite{Andrews,Stanley}.

\subsection{The boundary framework}

Recall that the \emph{main chain} is the canonical shortest path
\[
\mathcal M_n=\{(n-k,1^k):0\le k\le n-1\}
\]
from the one-part partition $(n)$ to the all-ones partition $(1^n)$.

The \emph{left edge} is the family
\[
\mathcal L_n:=\{(n-k,k):1\le k\le \lfloor n/2\rfloor\},
\]
and the \emph{right edge} is its conjugate image
\[
\mathcal R_n:=\mathcal L_n^\ast.
\]

\begin{definition}
The \emph{boundary framework} of $G_n$ is the subgraph
\[
\mathcal B_n:=\mathcal M_n\cup \mathcal L_n\cup \mathcal R_n.
\]
\end{definition}

The set $\mathcal B_n$ is canonically defined, depends only on the intrinsic combinatorics of the partition graph, and is invariant under conjugation. It will be viewed as the basic front-and-side outer skeleton of the graph.

\begin{remark}
In the present paper we do not attempt to characterize all graph-theoretic boundary vertices of $G_n$ in an intrinsic extremal sense. Instead, we use $\mathcal B_n$ as a canonical outer reference family and study how additional outer structures, especially the rectangular rear regime, are positioned relative to it.
\end{remark}

\subsection{Rectangular partitions}

\begin{definition}
A partition of $n$ is called \emph{rectangular} if it has the form
\[
(a^b)
\]
for some integers $a,b\ge 1$ with $ab=n$. Equivalently, its Ferrers diagram is a rectangle.

We write
\[
\Rect(n):=\{(a^b):ab=n,\ a,b\ge 1\}
\]
for the set of all rectangular partitions of $n$, and
\[
\Rect^\ast(n):=\{(a^b):ab=n,\ a,b\ge 2\}
\]
for the set of \emph{nontrivial rectangular partitions}.
\end{definition}

The family $\Rect^\ast(n)$ is nonempty if and only if $n$ is composite. When $n$ is prime, the only rectangular partitions are the two trivial antennas $(n)$ and $(1^n)$.

\subsection{Rear-contour candidates}

The rear profile of $G_n$ suggests several possible formalizations. At this stage we isolate two basic candidates.

\begin{definition}
The \emph{weak rectangular contour} is the vertex set
\[
RC_n^{\mathrm{wk}}:=\Rect^\ast(n).
\]
\end{definition}

This is the smallest natural rear marker family. It is canonical and conjugation-invariant, but it need not be connected and, as will be proved below, does not form an edge-path in general.

\begin{definition}
The \emph{strong rectangular contour} is the induced subgraph on the vertex set
\[
V(RC_n^+):=\Rect^\ast(n)\cup N(\Rect^\ast(n)),
\]
where $N(\Rect^\ast(n))$ denotes the set of all vertices adjacent to at least one nontrivial rectangular partition.
\end{definition}

The weak contour records only the rectangular marker vertices themselves, whereas the strong contour thickens this set by one graph step. The latter is better suited for the discussion of support geometry and ear-like appendices.

\begin{remark}
We do not claim in this paper that either $RC_n^{\mathrm{wk}}$ or $RC_n^+$ is the unique correct formalization of the rear boundary of $G_n$. Rather, these objects provide two natural and canonical rear candidates: a minimal discrete one and a one-neighborhood thickening. A more intrinsic theory of rear chains or rear separators is left for future work.
\end{remark}

\subsection{Auxiliary contour language}

For later comparison with more global complement-based notions, it is convenient to record a standard attachment construction.

\begin{definition}
Let $C_n$ be a chosen vertex set or induced subgraph of $G_n$. If $E$ is a connected component of $G_n\setminus C_n$, then its \emph{attachment locus} along $C_n$ is
\[
\Att_{C_n}(E):=N(E)\cap C_n.
\]
\end{definition}

\begin{remark}
In the proved part of the present paper, the main role is played not by component-based attachment loci, but by the more local notions of attachment pair and support edge introduced in Section~\ref{sec:rectangular-ears}. The contour-based language above will reappear only in the computational and open-problem layers.
\end{remark}

\section{Rectangular vertices, rectangular ears, and support edges}
\label{sec:rectangular-ears}

We retain the notation introduced in Section~\ref{sec:framework}. The purpose of this section is to show that nontrivial rectangular partitions form a distinguished outer marker family with a rigid local structure.

\subsection{The local structure of a rectangular vertex}

Let
\[
\rho=(a^b)\in \Rect^\ast(n),
\qquad a,b\ge 2,
\qquad ab=n.
\]

\begin{lemma}
\label{lem:rect-neighbors}
The vertex $\rho=(a^b)$ has exactly two neighbors in $G_n$, namely
\[
\alpha(\rho):=(a+1,a^{\,b-2},a-1)
\]
and
\[
\beta(\rho):=(a^{\,b-1},a-1,1),
\]
with the obvious simplification
\[
\alpha(\rho)=(a+1,a-1)
\quad\text{when } b=2.
\]
\end{lemma}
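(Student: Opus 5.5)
We classify all elementary unit transfers out of $\rho=(a^b)$. By definition of $G_n$, a neighbor is obtained by choosing a donor part, removing one unit from it, and adding that unit either to another existing part or to a new part of size $0$, then reordering. Two transfers yielding the same multiset give the same neighbor. Since all parts of $\rho$ equal $a$, the donor is always a part of size $a$. It becomes $a-1\ge 1$, because $a\ge 2$, so no part disappears. There are then only two types of recipient.

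\textbf{Case 1: the recipient is another existing part.} This requires $b\ge 2$, which holds. The recipient becomes $a+1$. After reordering we obtain
\[
(a+1,a^{\,b-2},a-1)=\alpha(\rho).
\]
By symmetry this is independent of which pair of parts is chosen. When $b=2$ it reads $(a+1,a-1)$.

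\textbf{Case 2: the recipient is a new part.} The unit forms a new part $1$, giving
\[
(a^{\,b-1},a-1,1)=\beta(\rho).
\]
Here $a-1\ge 1$ is a genuine part, so the result is a partition of $n$ with $b+1$ parts.

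We then check three things.

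First, $\alpha(\rho)\ne\rho$ and $\beta(\rho)\ne\rho$, since the multisets of parts differ.

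Second, $\alpha(\rho)\ne\beta(\rho)$. They have $b$ and $b+1$ parts respectively. Alternatively, $\alpha(\rho)$ has largest part $a+1$ while $\beta(\rho)$ has largest part $a$.

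Third, adjacency is symmetric. If $\mu$ is obtained from $\rho$ by one transfer, then $\rho$ is obtained from $\mu$ by the reverse transfer. This is presumably recorded in \cite{Lyudogovskiy-Local}; otherwise it is immediate. Hence the neighbors of $\rho$ are exactly the partitions reachable from $\rho$ by one transfer, and we do not need to search separately for partitions $\mu$ with $\rho$ reachable from $\mu$.

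The only point needing care is this symmetry. If the paper's convention were a directed one (for example, a transfer from a larger part to a smaller part only), I would instead enumerate the reverse moves. I would identify the partitions $\mu$ from which one transfer produces $(a^b)$. Such a $\mu$ must differ from $\rho$ in exactly two parts by $\pm1$, or have an extra part equal to $1$ absorbed. These are exactly $\alpha(\rho)$ and $\beta(\rho)$ again.

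The case $b=2$ is covered by the same argument, with the block $a^{\,b-2}$ empty. This gives the degree-$2$ claim with the explicit neighbors.
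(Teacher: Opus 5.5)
Your proposal is correct and matches the paper's argument: since every part of $(a^b)$ equals $a$, a unit transfer either moves a unit onto another copy of $a$, giving $\alpha(\rho)$, or onto a new part, giving $\beta(\rho)$. You also check that $\alpha(\rho)\neq\beta(\rho)$ and that adjacency is symmetric, so reverse transfers need no separate enumeration; the paper leaves both points implicit, and they are what pin the degree at exactly $2$.
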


\begin{proof}
We use the local transfer description already established in the local morphology paper, but in this special case the outcome is completely explicit.

Since all parts of $\rho=(a^b)$ are equal to $a$, there are only two one-step possibilities.

First, one may transfer one unit from one copy of $a$ to another copy of $a$. After reordering, this yields
\[
(a+1,a^{\,b-2},a-1)=\alpha(\rho).
\]

Second, one may move one unit from one copy of $a$ so as to create a new part of size $1$. After reordering, this yields
\[
(a^{\,b-1},a-1,1)=\beta(\rho).
\]

Thus $\alpha(\rho)$ and $\beta(\rho)$ are both neighbors of $\rho$. Because all parts of $\rho$ are equal, any unit transfer starting from $\rho$ must be of one of the two types listed above: either one unit is moved from one copy of $a$ to another, yielding $\alpha(\rho)$ after reordering, or one unit is removed from a part of size $a$ to create a new part $1$, yielding $\beta(\rho)$. Hence the explicit case analysis is exhaustive, and the two displayed neighbors are the only neighbors of $\rho$. This is also consistent with the degree formula established in the local morphology paper \cite{Lyudogovskiy-Local}.
\end{proof}

\begin{lemma}
\label{lem:rect-adj}
The two neighbors $\alpha(\rho)$ and $\beta(\rho)$ are adjacent.
\end{lemma}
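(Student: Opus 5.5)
To prove Lemma~\ref{lem:rect-adj}, I will exhibit a single elementary unit transfer that turns $\beta(\rho)$ into $\alpha(\rho)$. Recall
\[
\beta(\rho)=(a^{\,b-1},a-1,1),\qquad \alpha(\rho)=(a+1,a^{\,b-2},a-1).
\]
In $\beta(\rho)$, move the unit in the part $1$ onto one of the $b-1$ copies of $a$. This requires $b-1\ge 1$, which holds since $b\ge 2$. The part $1$ disappears, that copy of $a$ becomes $a+1$, and the remaining parts are $a^{\,b-2}$ together with $a-1$. After reordering, the result is $(a+1,a^{\,b-2},a-1)=\alpha(\rho)$, so the two vertices are adjacent.

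The only points needing care are the degenerate cases. If $b=2$, then $\beta(\rho)=(a,a-1,1)$, and the same transfer gives $(a+1,a-1)$, which matches the simplified form of $\alpha(\rho)$ stated in Lemma~\ref{lem:rect-neighbors}. If $a=2$, then $a-1=1$ and $\beta(\rho)=(2^{\,b-1},1,1)$. Moving one of the two $1$'s onto a $2$ gives $(3,2^{\,b-2},1)=\alpha(\rho)$, so the argument is unaffected.

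I should also check that the move is a genuine elementary transfer in the sense of the paper. Deleting a part equal to $1$ is exactly the reverse of the "create a new part of size $1$" move used in the proof of Lemma~\ref{lem:rect-neighbors}, so this convention is already in force.

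Finally, $\alpha(\rho)\neq\beta(\rho)$, since their largest parts are $a+1$ and $a$. Hence the edge is non-loop, and $\rho,\alpha(\rho),\beta(\rho)$ form a triangle. I do not expect any real obstacle; the only risk is mishandling the small cases, and these are covered above.
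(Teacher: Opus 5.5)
Your proof is correct and is essentially the paper's argument run in reverse. The paper starts from $\alpha(\rho)$ and moves one unit from the part $a+1$ to create a new part $1$, obtaining $\beta(\rho)$; this is exactly the inverse of your transfer, and your explicit check of the degenerate cases $a=2$ and $b=2$ is a harmless, slightly more careful addition.
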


\begin{proof}
Starting from
\[
\alpha(\rho)=(a+1,a^{\,b-2},a-1),
\]
move one unit from the part $a+1$ so as to create a new part $1$. After reordering, this gives
\[
(a^{\,b-1},a-1,1)=\beta(\rho).
\]
Hence $\alpha(\rho)$ and $\beta(\rho)$ are adjacent.
\end{proof}

\begin{proposition}
\label{prop:rect-local}
Let $\rho=(a^b)\in \Rect^\ast(n)$. Then the following hold.

\begin{enumerate}
\item $\deg_{G_n}(\rho)=2$;
\item the neighbors of $\rho$ are exactly $\alpha(\rho)$ and $\beta(\rho)$;
\item the three vertices
\[
\rho,\ \alpha(\rho),\ \beta(\rho)
\]
form a triangle in $G_n$;
\item this is the unique maximal clique of $G_n$ containing $\rho$;
\item consequently,
\[
\rho\in D_2(n)\cap L_2(n).
\]
\end{enumerate}
\end{proposition}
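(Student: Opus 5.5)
Items (1) and (2) follow from Lemma~\ref{lem:rect-neighbors}. One small point must be checked: $\alpha(\rho)\neq\beta(\rho)$, so that the degree is genuinely $2$ rather than $1$. This holds because $\alpha(\rho)$ has largest part $a+1$, while $\beta(\rho)$ has largest part $a$. Equivalently, $\alpha(\rho)$ has $b$ parts and $\beta(\rho)$ has $b+1$ parts.

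For (3), I would combine (2) with Lemma~\ref{lem:rect-adj}. The vertex $\rho$ is adjacent to both $\alpha(\rho)$ and $\beta(\rho)$, and these two are adjacent to each other. The three vertices are pairwise distinct: $\rho$ has $b$ parts all equal to $a$, whereas $\alpha(\rho)$ contains a part $a+1$ and $\beta(\rho)$ has $b+1$ parts. Hence $\{\rho,\alpha(\rho),\beta(\rho)\}$ is a triangle.

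For (4), the argument is purely graph-theoretic. Any clique containing $\rho$ lies in the closed neighborhood $N[\rho]=\{\rho,\alpha(\rho),\beta(\rho)\}$. By (3) this whole set is a clique, so it is the unique maximal clique containing $\rho$. In the same way, the triangle is the unique triangle through $\rho$, since any triangle through $\rho$ must use both of its neighbors.

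For (5), I would unwind the definitions of the layers from the earlier papers \cite{Lyudogovskiy-Local,Lyudogovskiy-Simplex}. $D_2(n)$ is the set of vertices of degree $2$, so membership follows from (1). $L_2(n)$ is, I expect, the set of vertices whose maximal simplex dimension in $K_n=\mathrm{Cl}(G_n)$ equals $2$. By (4), the maximal cliques through $\rho$ consist of just the triangle, which is a $2$-simplex. So $\rho$ lies in a $2$-simplex but in no $3$-simplex, and $\rho\in L_2(n)$.

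The only real obstacle is matching the precise convention for $L_k(n)$, for instance whether it is indexed by the dimension or by the size of the largest clique. If the convention is by clique size, the index would have to be adjusted, but the statement as written suggests the dimension convention. Everything else is immediate from the two preceding lemmas.
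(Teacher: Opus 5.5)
Your proposal is correct and follows essentially the same route as the paper: (1)--(3) come from the two preceding lemmas, and (4) holds because every clique through $\rho$ lies in the closed neighborhood $\{\rho,\alpha(\rho),\beta(\rho)\}$, which is itself a triangle; (5) is then read off as degree $2$ and local simplex dimension $2$. Your explicit distinctness check ($\alpha(\rho)$ has $b$ parts and largest part $a+1$, while $\beta(\rho)$ has $b+1$ parts) is a detail the paper leaves implicit, and your dimension-indexed reading of $L_2(n)$ matches the paper's convention (compare Proposition~\ref{prop:support-layer}, where a $4$-clique places a vertex in $\bigcup_{r\ge 3}L_r(n)$).
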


\begin{proof}
Parts (1)--(3) follow from Lemmas~\ref{lem:rect-neighbors} and \ref{lem:rect-adj}.

Since $\rho$ has exactly two neighbors, every clique containing $\rho$ is contained in the set
\[
\{\rho,\alpha(\rho),\beta(\rho)\}.
\]
Because these three vertices already form a triangle, every clique containing $\rho$ is contained in this triangle. Therefore
\[
\{\rho,\alpha(\rho),\beta(\rho)\}
\]
is the unique maximal clique of $G_n$ containing $\rho$. It follows that the local simplex dimension at $\rho$ is equal to $2$, which is precisely the statement that $\rho\in L_2(n)$. The inclusion $\rho\in D_2(n)$ is just part~(1).
\end{proof}

\subsection{Rectangular ears and support edges}

The local motif singled out in Proposition~\ref{prop:rect-local} will be regarded as the basic ear-like configuration carried by a rectangular vertex.

\begin{definition}
\label{def:rect-ear}
Let $\rho\in \Rect^\ast(n)$.

\begin{enumerate}
\item The \emph{rectangular ear rooted at $\rho$} is the rooted $V$-configuration consisting of the vertex $\rho$ and the two edges
\[
\rho\alpha(\rho),\qquad \rho\beta(\rho).
\]

\item The unordered pair
\[
A(\rho):=\{\alpha(\rho),\beta(\rho)\}
\]
is called the \emph{attachment pair} of the ear.

\item The edge
\[
s(\rho):=\alpha(\rho)\beta(\rho)
\]
is called the \emph{support edge} of the ear.

\item The triangle
\[
T(\rho):=\{\rho,\alpha(\rho),\beta(\rho)\}
\]
will be called the \emph{triangular closure} of the ear.
\end{enumerate}
\end{definition}

Thus the rectangular vertex $\rho$ is the terminal vertex of the ear, whereas the support edge $s(\rho)$ is its base inside the ambient graph.

\begin{proposition}
\label{prop:rect-independent}
The set $\Rect^\ast(n)$ is an independent set in $G_n$. Equivalently, no two distinct nontrivial rectangular partitions are adjacent.
\end{proposition}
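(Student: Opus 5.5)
The plan is to use Lemma~\ref{lem:rect-neighbors}, which lists the only two neighbors of each $\rho=(a^b)\in\Rect^\ast(n)$, and check that neither neighbor is rectangular. Then no edge of $G_n$ can join two elements of $\Rect^\ast(n)$. This argument is direct and needs no comparison between different rectangles.

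Fix $\rho=(a^b)$ with $a,b\ge 2$ and $ab=n$. The neighbors are $\alpha(\rho)=(a+1,a^{\,b-2},a-1)$ and $\beta(\rho)=(a^{\,b-1},a-1,1)$. A partition is rectangular exactly when its largest and smallest parts coincide. For $\alpha(\rho)$, the largest part is $a+1$ and the smallest is $a-1$; this also covers the case $b=2$, where $\alpha(\rho)=(a+1,a-1)$. Since $a+1\neq a-1$, $\alpha(\rho)$ is not rectangular.

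For $\beta(\rho)$, the largest part is $a$, because $b\ge 2$ guarantees at least one copy of $a$ survives. The smallest part is $1$. Since $a\ge 2$, these differ, so $\beta(\rho)$ is not rectangular either.

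Hence no neighbor of any $\rho\in\Rect^\ast(n)$ lies in $\Rect(n)$, let alone in $\Rect^\ast(n)$, and independence follows. The only point needing care is the boundary cases $b=2$ and $a=2$. When $a=2$ we have $a-1=1$, so $\beta(\rho)=(2^{\,b-1},1,1)$, which still has largest part $2>1$. There is no real obstacle here.

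A by-product worth recording is that the stronger statement also holds: a nontrivial rectangle is not adjacent even to the antennas $(n)$ and $(1^n)$.
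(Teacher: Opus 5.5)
Your proof is correct and matches the paper's argument: both use Lemma~\ref{lem:rect-neighbors} to reduce independence to checking that $\alpha(\rho)$ (parts $a+1$ and $a-1$) and $\beta(\rho)$ (parts $a$ and $1$) each have two distinct part sizes, so neither is rectangular. Your explicit treatment of the cases $a=2$ and $b=2$, and your remark that $\rho$ is also not adjacent to the antennas, are correct additions.
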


\begin{proof}
Let $\rho=(a^b)\in \Rect^\ast(n)$. By Lemma~\ref{lem:rect-neighbors}, every neighbor of $\rho$ is equal either to
\[
\alpha(\rho)=(a+1,a^{\,b-2},a-1)
\]
or to
\[
\beta(\rho)=(a^{\,b-1},a-1,1).
\]
Neither of these partitions is rectangular. Indeed, $\alpha(\rho)$ has at least two distinct part sizes, namely $a+1$ and $a-1$. Likewise, $\beta(\rho)$ has at least two distinct part sizes: it contains both $a$ and $1$, and therefore cannot be rectangular. Hence no neighbor of $\rho$ belongs to $\Rect^\ast(n)$. Since $\rho$ was arbitrary, the whole set $\Rect^\ast(n)$ is independent.
\end{proof}

\begin{remark}
\label{rem:weak-discrete}
Proposition~\ref{prop:rect-independent} shows that the weak rectangular contour
\[
RC_n^{\mathrm{wk}}=\Rect^\ast(n)
\]
is a discrete rear marker family rather than a graph-theoretic path. Any connected rear contour must therefore arise from a thickening or from a different construction.
\end{remark}

\subsection{Tetrahedral support for genuine rear ears}

The support edge of a rectangular ear may itself lie in higher-dimensional simplices of the clique complex. For genuinely rear rectangular vertices, that support is already tetrahedral.

\begin{proposition}
\label{prop:rect-tetra}
Let
\[
\rho=(a^b)\in \Rect^\ast(n)
\]
with
\[
a\ge 3,\qquad b\ge 3.
\]
Define
\[
\gamma_1(\rho):=(a+1,a^{\,b-3},(a-1)^2,1)
\]
and
\[
\gamma_2(\rho):=(a+1,a^{\,b-2},a-2,1).
\]
Then the four vertices
\[
\alpha(\rho),\ \beta(\rho),\ \gamma_1(\rho),\ \gamma_2(\rho)
\]
form a $4$-clique in $G_n$. In particular, the support edge $s(\rho)=\alpha(\rho)\beta(\rho)$ is contained in a $3$-simplex of the clique complex $K_n=\mathrm{Cl}(G_n)$.
\end{proposition}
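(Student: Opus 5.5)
The plan is a direct verification. By Lemma~\ref{lem:rect-adj}, $\alpha(\rho)$ and $\beta(\rho)$ are already adjacent. That leaves five adjacencies to check:
\[
\alpha\gamma_1,\quad \alpha\gamma_2,\quad \beta\gamma_1,\quad \beta\gamma_2,\quad \gamma_1\gamma_2 .
\]
For each pair we exhibit a single unit transfer that turns one partition into the other, followed by reordering. The hypotheses $a,b\ge 3$ are used only to ensure that the displayed multisets are genuine partitions of $n$. The condition $b\ge 3$ makes $a^{\,b-3}$ meaningful, so that $\gamma_1$ has a part $a+1$ and two parts $a-1$. The condition $a\ge 3$ makes $a-2\ge 1$, so that $\gamma_2$ has a positive part $a-2$ which is distinct from the new part $1$. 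A quick sum check gives $|\gamma_1|=|\gamma_2|=ab=n$.

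The transfers, written as multiset moves, are as follows.
\begin{enumerate}
\item \emph{$\alpha\to\gamma_1$:} take one unit from a copy of $a$ in $\alpha=(a+1,a^{\,b-2},a-1)$ to form a new part $1$. This gives $(a+1,a^{\,b-3},(a-1)^2,1)$.
\item \emph{$\alpha\to\gamma_2$:} take one unit from the part $a-1$ to form a new part $1$. This gives $(a+1,a^{\,b-2},a-2,1)$.
\item \emph{$\beta\to\gamma_1$:} in $\beta=(a^{\,b-1},a-1,1)$, move one unit from a copy of $a$ to another copy of $a$. This gives $(a+1,a^{\,b-3},(a-1)^2,1)$.
\item \emph{$\beta\to\gamma_2$:} move one unit from the part $a-1$ to a copy of $a$. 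This gives $(a+1,a^{\,b-2},a-2,1)$.
\item \emph{$\gamma_1\to\gamma_2$:} move one unit from one copy of $a-1$ to a copy of $a$. The multiset $a^{\,b-3},(a-1)^2$ becomes $a^{\,b-3}\setminus\{a\}\cup\{a+1\}$ together with $a-1,a-2$. This is not right, because it would produce two parts of size $a+1$.
\end{enumerate}

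The last pair is the point to double-check, since the naive move fails. The correct move for $\gamma_1\to\gamma_2$ is to move one unit from one copy of $a-1$ to the other copy of $a-1$. This turns $(a-1,a-1)$ into $(a,a-2)$, and the result is
\[
(a+1,a^{\,b-2},a-2,1)=\gamma_2 .
\]
In the proof I will present each move uniformly in this ``donor part, recipient part'' form, comparing multisets part by part. Each move changes exactly two parts by $\pm1$, where a part of size $0$ counts as an absent part, and this is exactly the elementary transfer adjacency of $G_n$.

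The main obstacle is bookkeeping rather than ideas. We must make sure that each pair differs by exactly one unit transfer, and not by a transfer that happens to coincide with another one only after reordering. The safest approach is to write each pair of partitions as multisets and check that their symmetric difference consists of exactly one part $x$ replaced by $x-1$ and one part $y$ replaced by $y+1$. It must also be confirmed that the four vertices are pairwise distinct, which holds because $a\ge 3$ separates $a-2$ from $1$ and from $a-1$. The $4$-clique then spans a $3$-simplex of $K_n=\mathrm{Cl}(G_n)$ by the definition of the clique complex, and this simplex contains $s(\rho)$.
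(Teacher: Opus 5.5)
Your proposal is correct and follows the paper's proof essentially verbatim: you check the same six adjacencies using the same donor/recipient moves, including the $\gamma_1\to\gamma_2$ transfer between the two copies of $a-1$ (omit the abandoned first attempt in item~5 from the write-up). One small slip is that for $a=3$ one has $a-2=1$, so the part $a-2$ of $\gamma_2$ is \emph{not} distinct from the new part $1$ (the paper flags such coincidences explicitly), but this is harmless, since all moves remain nontrivial and the four vertices are still distinct: $\alpha$ has no part $1$, $\beta$ has no part $a+1$, and $\gamma_1$ has two parts $a-1$ while $\gamma_2$ has none.
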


\begin{proof}
We verify the six adjacencies directly.

\smallskip
\noindent
\emph{(i) $\alpha(\rho)\sim \beta(\rho)$.}
This is Lemma~\ref{lem:rect-adj}.

\smallskip
\noindent
\emph{(ii) $\alpha(\rho)\sim \gamma_1(\rho)$.}
Starting from
\[
\alpha(\rho)=(a+1,a^{\,b-2},a-1),
\]
move one unit from one of the parts of size $a$ so as to create a new $1$. After reordering, this yields
\[
(a+1,a^{\,b-3},(a-1)^2,1)=\gamma_1(\rho).
\]

\smallskip
\noindent
\emph{(iii) $\alpha(\rho)\sim \gamma_2(\rho)$.}
Starting from $\alpha(\rho)$, move one unit from the part $a-1$ so as to create a new $1$. After reordering, this yields
\[
(a+1,a^{\,b-2},a-2,1)=\gamma_2(\rho).
\]

\smallskip
\noindent
\emph{(iv) $\beta(\rho)\sim \gamma_1(\rho)$.}
Starting from
\[
\beta(\rho)=(a^{\,b-1},a-1,1),
\]
transfer one unit from one of the parts of size $a$ to another part of size $a$. After reordering, this yields
\[
(a+1,a^{\,b-3},(a-1)^2,1)=\gamma_1(\rho).
\]

\smallskip
\noindent
\emph{(v) $\beta(\rho)\sim \gamma_2(\rho)$.}
Starting from $\beta(\rho)$, transfer one unit from the part $a-1$ to one of the parts of size $a$. After reordering, this yields
\[
(a+1,a^{\,b-2},a-2,1)=\gamma_2(\rho).
\]

\smallskip
\noindent
\emph{(vi) $\gamma_1(\rho)\sim \gamma_2(\rho)$.}
Starting from
\[
\gamma_1(\rho)=(a+1,a^{\,b-3},(a-1)^2,1),
\]
transfer one unit from one part of size $a-1$ to the other part of size $a-1$. After reordering, this yields
\[
(a+1,a^{\,b-2},a-2,1)=\gamma_2(\rho).
\]

Thus every pair among
\[
\alpha(\rho),\ \beta(\rho),\ \gamma_1(\rho),\ \gamma_2(\rho)
\]
is adjacent. Hence these four vertices form a $4$-clique.

When $a=3$ or $b=3$, some displayed part sizes may coincide after reordering, but the same formulas still define valid partitions and the same one-step verifications remain valid.
\end{proof}

\begin{remark}
\label{rem:side-rear-support}
Proposition~\ref{prop:rect-tetra} shows that genuinely rear rectangular ears, namely those with $a,b\ge 3$, are attached along a support edge that already sits inside a tetrahedral configuration. The borderline cases with $\min(a,b)=2$ are more degenerate and will be treated only computationally in the present paper.
\end{remark}

\subsection{Interaction with the framework and the axis}

We now locate the rectangular family relative to the previously established outer and inner reference structures.

\begin{proposition}
\label{prop:rect-intersections}
For every $n\ge 2$, the following hold.

\begin{enumerate}
\item
\[
\Rect^\ast(n)\cap \mathcal M_n=\varnothing.
\]

\item If $n$ is even and $n\ge 4$, then
\[
\Rect^\ast(n)\cap \mathcal L_n=\{(n/2,n/2)\},
\qquad
\Rect^\ast(n)\cap \mathcal R_n=\{(2^{n/2})\};
\]
if $n=2$ or if $n$ is odd, then both intersections are empty.

\item
\[
\Rect^\ast(n)\cap Ax_n=
\begin{cases}
\{(a^a)\}, & \text{if } n=a^2 \text{ is a square},\\[1ex]
\varnothing, & \text{otherwise}.
\end{cases}
\]

\item
\[
\Rect^\ast(n)\subset D_2(n)\cap L_2(n).
\]
\end{enumerate}
\end{proposition}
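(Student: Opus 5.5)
The plan is to verify each of the four items by direct shape comparison. Every $\rho=(a^b)\in\Rect^\ast(n)$ has exactly $b\ge 2$ parts, all equal to $a\ge 2$. Each item then reduces to matching this shape against the explicit descriptions of $\mathcal M_n$, $\mathcal L_n$, $\mathcal R_n$ and $Ax_n$. Item (4) needs no new work: it is Proposition~\ref{prop:rect-local}(5) applied to each element of $\Rect^\ast(n)$.

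For (1), an element of $\mathcal M_n$ has the form $(n-k,1^k)$. If $k\ge 1$, it contains a part equal to $1$, whereas $\rho$ has all parts $a\ge 2$. If $k=0$, it is the one-part partition $(n)$, whereas $\rho$ has $b\ge 2$ parts. So the intersection is empty.

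For (2), an element of $\mathcal L_n$ has exactly two parts, so $(a^b)\in\mathcal L_n$ forces $b=2$. Then $(n-k,k)=(a,a)$ gives $n=2a$ and $k=a=n/2$, with $a\ge 2$ exactly when $n\ge 4$. Conversely, for even $n\ge 4$ the partition $(n/2,n/2)$ lies in both sets, since $k=n/2\le\lfloor n/2\rfloor$. For $\mathcal R_n$ I would use conjugation. The map $\lambda\mapsto\lambda^\ast$ sends $(a^b)$ to $(b^a)$, so it preserves $\Rect^\ast(n)$, and $\mathcal R_n=\mathcal L_n^\ast$ by definition. Hence $\Rect^\ast(n)\cap\mathcal R_n=(\Rect^\ast(n)\cap\mathcal L_n)^\ast$, which is $\{(2^{n/2})\}$ or $\varnothing$. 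The cases $n=2$ and $n$ odd are covered by the same argument: there $n/2$ is not an integer $\ge 2$.

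For (3), I take $Ax_n$ to be the self-conjugate axis, i.e.\ the set of self-conjugate partitions, following the convention of the earlier papers in the series. Since $(a^b)^\ast=(b^a)$, the partition $(a^b)$ is self-conjugate iff $a=b$, which forces $n=a^2$. Conversely, if $n=a^2$ with $a\ge 2$, then $(a^a)\in\Rect^\ast(n)\cap Ax_n$. When $n$ is not a square, no such pair $a=b$ exists.

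The main obstacle I expect is bookkeeping rather than mathematics. One point is the exact definition of $Ax_n$, which is only imported from earlier papers. Another is the boundary cases: $n=4$, where $(2,2)$ lies in both $\mathcal L_4$ and $\mathcal R_4$ and is also the square case $(2^2)\in Ax_4$; $n=2$; and prime $n$, where $\Rect^\ast(n)=\varnothing$ and every statement holds trivially. I would check these cases explicitly so that the casework in (2) and (3) is consistent.
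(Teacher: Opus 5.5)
Your proposal is correct and follows the paper's proof item by item: (1) shape comparison with $(n-k,1^k)$; (2) the two-part condition $n-k=k$, then conjugation using $\mathcal R_n=\mathcal L_n^\ast$; (3) a rectangle is self-conjugate iff it is a square; (4) Proposition~\ref{prop:rect-local}(5). Your explicit split into $k=0$ and $k\ge 1$ in (1), and your check that $a\ge 2$ iff $n\ge 4$ in (2), are a little more careful than the paper's wording, but the argument is the same.
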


\begin{proof}
(1) By the explicit description of the main chain,
\[
\mathcal M_n=\{(n-k,1^k):0\le k\le n-1\}.
\]
A nontrivial rectangular partition $(a^b)$ with $a,b\ge 2$ has all parts equal and contains neither a unique large part nor a tail of ones. Hence it cannot lie on $\mathcal M_n$.

\smallskip
\noindent
(2) By definition,
\[
\mathcal L_n=\{(n-k,k):1\le k\le \lfloor n/2\rfloor\}.
\]
A partition of the form $(n-k,k)$ is rectangular if and only if its two parts are equal, that is, if and only if $n-k=k$. Thus
\[
\Rect^\ast(n)\cap \mathcal L_n=
\begin{cases}
\{(n/2,n/2)\}, & n \text{ even and } n\ge 4,\\
\varnothing, & n=2 \text{ or } n \text{ odd}.
\end{cases}
\]
Applying conjugation gives the corresponding statement for $\mathcal R_n$:
\[
(\Rect^\ast(n)\cap \mathcal L_n)^\ast=\Rect^\ast(n)\cap \mathcal R_n,
\]
and for even $n\ge 4$ this gives
\[
(n/2,n/2)^\ast=(2^{n/2}).
\]

\smallskip
\noindent
(3) A rectangular Ferrers diagram is self-conjugate if and only if it is a square. Thus $(a^b)$ is self-conjugate if and only if $a=b$, equivalently $n=a^2$.

\smallskip
\noindent
(4) This is exactly Proposition~\ref{prop:rect-local}(5).
\end{proof}

\begin{corollary}
\label{cor:rect-framework}
The only nontrivial rectangular vertices on the boundary framework are the side-framework intersections
\[
(n/2,n/2)\in \mathcal L_n,
\qquad
(2^{n/2})\in \mathcal R_n
\]
when $n$ is even and $n\ge 4$. These two vertices are conjugate and are distinct except in the degenerate case $n=4$. All other nontrivial rectangular vertices lie off the framework.
\end{corollary}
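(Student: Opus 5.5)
The corollary should follow by combining parts (1) and (2) of Proposition~\ref{prop:rect-intersections} with elementary facts about conjugation. The plan has three steps.

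First, I would intersect $\Rect^\ast(n)$ with $\mathcal B_n=\mathcal M_n\cup\mathcal L_n\cup\mathcal R_n$ and distribute the intersection over the union. Part (1) gives $\Rect^\ast(n)\cap\mathcal M_n=\varnothing$. Part (2) shows that the remaining two pieces are $\{(n/2,n/2)\}$ and $\{(2^{n/2})\}$ when $n$ is even with $n\ge 4$, and are empty otherwise. Hence
\[
\Rect^\ast(n)\cap\mathcal B_n\subseteq\{(n/2,n/2),(2^{n/2})\},
\]
with equality in the even case $n\ge4$. This already gives the first and last sentences of the corollary. Every element of $\Rect^\ast(n)$ outside this set lies off the framework.

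Second, I would check that the two vertices are conjugate. The Ferrers diagram of $(n/2,n/2)$ has two rows of length $n/2$, so its conjugate has $n/2$ rows of length $2$, that is, $(2^{n/2})$. This computation was also recorded in the proof of Proposition~\ref{prop:rect-intersections}(2).

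Third, I would settle when the two vertices coincide. Two partitions are equal exactly when their multisets of parts agree. The partition $(n/2,n/2)$ has two parts of size $n/2$, while $(2^{n/2})$ has $n/2$ parts of size $2$. These agree if and only if $n/2=2$, that is, $n=4$. In that case both vertices are the square $(2^2)$, which is consistent with part (3), since $(2^2)$ is the self-conjugate rectangle.

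None of these steps should present a real obstacle. The only point needing care is the degenerate case $n=4$. There $\mathcal L_4$ and $\mathcal R_4$ both contain the same vertex $(2,2)$, so the framework contains one nontrivial rectangle rather than two, and the statement should be read in that light.
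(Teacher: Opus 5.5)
Your proposal is correct and follows the paper's proof, which likewise distributes the intersection over $\mathcal M_n\cup\mathcal L_n\cup\mathcal R_n$ and invokes parts (1) and (2) of Proposition~\ref{prop:rect-intersections}. Your explicit verification of the conjugacy and of the coincidence $(n/2,n/2)=(2^{n/2})$ exactly when $n=4$ fills in the remaining claims of the corollary, which the paper's two-line proof leaves implicit (the conjugacy appears only in the proof of part (2)).
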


\begin{proof}
By Proposition~\ref{prop:rect-intersections}(1), no nontrivial rectangular vertex lies on $\mathcal M_n$. By part~(2), the only possible intersections with the side edges are the two stated vertices, and these occur only when $n$ is even.
\end{proof}

\section{Support corridors between rectangular ears}
\label{sec:support-corridors}

The local picture established in Section~\ref{sec:rectangular-ears} suggests that nontrivial rectangular vertices should not be viewed merely as isolated markers. Rather, each such vertex carries a distinguished support edge, and it is natural to ask how these support edges are positioned relative to one another inside the ambient graph. This leads to a second layer of outer morphology: the study of support corridors between rectangular ears.

\subsection{Side ears and genuine rear ears}

\begin{definition}
Let
\[
\rho=(a^b)\in \Rect^\ast(n).
\]

\begin{enumerate}
\item The ear rooted at $\rho$ is called a \emph{side ear} if $\rho\in \mathcal B_n$.

\item The ear rooted at $\rho$ is called a \emph{genuine rear ear} if $\rho\notin \mathcal B_n$.
\end{enumerate}
\end{definition}

By Corollary~\ref{cor:rect-framework}, side ears occur only when $n$ is even and $n\ge 4$, and then the side roots are precisely
\[
(n/2,n/2)\in \mathcal L_n,
\qquad
(2^{n/2})\in \mathcal R_n,
\]
which are distinct except in the degenerate case $n=4$. All remaining nontrivial rectangular ears are genuine rear ears.

\subsection{Support zones and support distances}

From the graph-theoretic point of view, the natural base of a rectangular ear is its support edge rather than its tip.

\begin{definition}
The \emph{support zone} of the rectangular family is the induced subgraph
\[
\Sigma_n:=G_n\Big[\bigcup_{\rho\in \Rect^\ast(n)} A(\rho)\Big],
\]
that is, the induced subgraph on the union of all attachment pairs.
\end{definition}

\begin{definition}
Let $\rho,\sigma\in \Rect^\ast(n)$. The \emph{support distance} between the ears rooted at $\rho$ and $\sigma$ is the quantity
\[
d_{\mathrm{sup}}(\rho,\sigma)
\in \mathbb N_0\cup\{\infty\}
\]
defined by
\[
d_{\mathrm{sup}}(\rho,\sigma)
:=
\min\{d_{G_n\setminus \Rect^\ast(n)}(u,v):u\in A(\rho),\ v\in A(\sigma)\},
\]
whenever at least one path in $G_n\setminus \Rect^\ast(n)$ joins a vertex of $A(\rho)$ to a vertex of $A(\sigma)$, and by
\[
d_{\mathrm{sup}}(\rho,\sigma):=\infty
\]
if no such path exists.
\end{definition}

Thus the rectangular tips themselves are removed, and the distance is measured between the corresponding bases inside the remaining graph. In particular, the value $\infty$ records the possibility that the two attachment pairs lie in different connected components of $G_n\setminus \Rect^\ast(n)$.

\begin{remark}
Other choices are possible: one could measure distance in $G_n$, in $G_n\setminus \Rect^\ast(n)$, or inside the support zone $\Sigma_n$. In the present paper we adopt the middle option as the basic one. The convention above makes the definition well posed even when the relevant support-zone data are disconnected.
\end{remark}

\begin{definition}
Let $\rho,\sigma\in \Rect^\ast(n)$.

\begin{enumerate}
\item If $d_{\mathrm{sup}}(\rho,\sigma)<\infty$, a \emph{support geodesic} from $\rho$ to $\sigma$ is a shortest path in $G_n\setminus \Rect^\ast(n)$ joining some vertex of $A(\rho)$ to some vertex of $A(\sigma)$.

\item Any chosen support geodesic will be called a \emph{support corridor} between $\rho$ and $\sigma$.

\item If $d_{\mathrm{sup}}(\rho,\sigma)<\infty$, the union of all support geodesics from $\rho$ to $\sigma$ will be denoted by
\[
\Corr_{\mathrm{all}}(\rho,\sigma)
\]
and called the \emph{full support-corridor set}. If $d_{\mathrm{sup}}(\rho,\sigma)=\infty$, we set
\[
\Corr_{\mathrm{all}}(\rho,\sigma):=\varnothing.
\]
\end{enumerate}
\end{definition}

\begin{remark}
The full support-corridor set $\Corr_{\mathrm{all}}(\rho,\sigma)$ need not be connected a priori, since different shortest endpoint-pair geodesics may be disjoint. It may also be empty when the corresponding attachment pairs are disconnected in $G_n\setminus \Rect^\ast(n)$.
\end{remark}

\subsection{Immediate structural consequences}

The next proposition records what follows immediately from Section~\ref{sec:rectangular-ears}.

\begin{proposition}
\label{prop:support-layer}
Let
\[
\rho=(a^b)\in \Rect^\ast(n)
\]
with $a,b\ge 3$, so that the ear rooted at $\rho$ is a genuine rear ear. Then each endpoint of the support edge $s(\rho)$ belongs to a clique of size $4$. In particular,
\[
\alpha(\rho),\ \beta(\rho)\in \bigcup_{r\ge 3} L_r(n),
\]
that is, both vertices lie in simplex layers of local simplex dimension at least $3$.
\end{proposition}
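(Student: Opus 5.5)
The plan is to derive the statement directly from Proposition~\ref{prop:rect-tetra}, using the definition of the simplex layers $L_r(n)$ recalled from the earlier papers of the series. Because $a,b\ge 3$, the hypotheses of Proposition~\ref{prop:rect-tetra} hold. It supplies the two vertices $\gamma_1(\rho)$ and $\gamma_2(\rho)$ and shows that
\[
\{\alpha(\rho),\beta(\rho),\gamma_1(\rho),\gamma_2(\rho)\}
\]
is a $4$-clique in $G_n$. Each endpoint of the support edge $s(\rho)=\alpha(\rho)\beta(\rho)$ belongs to this single clique, which gives the first assertion at once.

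The second step turns clique membership into layer membership. Recall that the local simplex dimension of a vertex $v$ is the largest dimension of a simplex of $K_n=\mathrm{Cl}(G_n)$ containing $v$; equivalently, it is one less than the size of the largest clique through $v$. The layer $L_r(n)$ consists of the vertices whose local simplex dimension equals $r$. This is the same convention used in Proposition~\ref{prop:rect-local}, where a maximal clique of size $3$ placed $\rho$ in $L_2(n)$. A $4$-clique through $\alpha(\rho)$ is a $3$-simplex containing $\alpha(\rho)$, so the local simplex dimension of $\alpha(\rho)$ is at least $3$. The same holds for $\beta(\rho)$. Since the layers partition the vertex set, both vertices lie in $\bigcup_{r\ge 3}L_r(n)$.

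I expect no real obstacle. The only point to check is that the convention for $L_r(n)$ really is ``maximal simplex dimension at $v$'' (clique size minus one) and not some other local invariant. This matters because a vertex in a $4$-clique must land in a layer with index at least $3$, but not necessarily exactly $3$. The degenerate cases $a=3$ or $b=3$ are already covered by the final remark in the proof of Proposition~\ref{prop:rect-tetra}: the four vertices are still distinct partitions, since they differ in their multisets of parts. So the clique genuinely has size $4$, and the argument needs no separate case analysis.
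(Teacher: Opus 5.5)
Your proposal is correct and follows the paper's own argument. You invoke Proposition~\ref{prop:rect-tetra} to place $\alpha(\rho)$ and $\beta(\rho)$ in the $4$-clique $\{\alpha(\rho),\beta(\rho),\gamma_1(\rho),\gamma_2(\rho)\}$, and then use the convention that local simplex dimension is the largest clique size through the vertex minus one to conclude it is at least $3$. Your extra check that the four vertices are pairwise distinct, so the clique genuinely has size $4$ even when $a=3$ or $b=3$, is a small point the paper leaves implicit.
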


\begin{proof}
By Proposition~\ref{prop:rect-tetra}, the support edge
\[
s(\rho)=\alpha(\rho)\beta(\rho)
\]
is contained in a $4$-clique. Hence both endpoints $\alpha(\rho)$ and $\beta(\rho)$ belong to a $3$-simplex of the clique complex $K_n=\mathrm{Cl}(G_n)$, so their local simplex dimension is at least $3$.
\end{proof}

\begin{corollary}
\label{cor:rear-start}
Every support corridor issuing from a genuine rear ear starts at a vertex whose local simplex dimension is at least $3$.
\end{corollary}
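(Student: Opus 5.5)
The argument unwinds the definition of a support corridor and then applies Proposition~\ref{prop:support-layer}. Let $\rho=(a^b)$ with $a,b\ge 3$ root a genuine rear ear, and let $\sigma\in\Rect^\ast(n)$ be any other rectangular root with $d_{\mathrm{sup}}(\rho,\sigma)<\infty$; otherwise there is no corridor and nothing needs to be proved. By definition, a support corridor from $\rho$ to $\sigma$ is a shortest path in $G_n\setminus\Rect^\ast(n)$ whose initial vertex $u$ lies in the attachment pair $A(\rho)=\{\alpha(\rho),\beta(\rho)\}$. So it suffices to show that both $\alpha(\rho)$ and $\beta(\rho)$ have local simplex dimension at least $3$ in $G_n$.

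That fact is exactly Proposition~\ref{prop:support-layer}, which in turn rests on the $4$-clique $\{\alpha(\rho),\beta(\rho),\gamma_1(\rho),\gamma_2(\rho)\}$ of Proposition~\ref{prop:rect-tetra}. Whichever endpoint $u$ is chosen, we have $u\in\bigcup_{r\ge3}L_r(n)$, and this gives the claim.

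There is one point of care. The definition of "genuine rear ear" in the text is $\rho\notin\mathcal B_n$, while Proposition~\ref{prop:support-layer} assumes $a,b\ge3$; these must be reconciled. By Corollary~\ref{cor:rect-framework}, $\rho\notin\mathcal B_n$ excludes only $(n/2,n/2)$ and $(2^{n/2})$. It does not exclude, say, $(2^b)$ or $(a^2)$ with fewer than $n/2$ parts, which is impossible anyway since $ab=n$ forces $(a^2)=(n/2,n/2)$ and $(2^b)=(2^{n/2})$. So $\min(a,b)=2$ means exactly that $\rho$ is one of the two side roots. Hence genuine rear ears are precisely those with $a,b\ge3$, and Proposition~\ref{prop:support-layer} applies.

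A second point is that the corridor lives in $G_n\setminus\Rect^\ast(n)$, while local simplex dimension is measured in $G_n$. The $4$-clique contains no rectangular vertex, because $\alpha(\rho),\beta(\rho),\gamma_1(\rho),\gamma_2(\rho)$ all have at least two distinct part sizes. So the clique survives in the deleted graph as well, and the conclusion holds under either reading.

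The main obstacle is this bookkeeping step matching the two definitions of genuine rear ear; the rest is immediate.
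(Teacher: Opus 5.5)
Your proof is correct and follows the paper's argument: a support corridor issuing from $\rho$ starts at a vertex of $A(\rho)=\{\alpha(\rho),\beta(\rho)\}$, and Proposition~\ref{prop:support-layer} gives both of these vertices local simplex dimension at least $3$. Your check that $\rho\notin\mathcal B_n$ forces $a,b\ge 3$ (via Corollary~\ref{cor:rect-framework} and $ab=n$) makes explicit a step the paper leaves implicit, and your remark that the $4$-clique avoids $\Rect^\ast(n)$ is harmless but not needed.
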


\begin{proof}
A support corridor starts at one of the two vertices of the attachment pair $A(\rho)=\{\alpha(\rho),\beta(\rho)\}$, and Proposition~\ref{prop:support-layer} shows that both of these vertices have local simplex dimension at least $3$.
\end{proof}

\begin{remark}
This provides a first rigorous distinction between side and rear behavior. A genuine rear ear is not attached merely at the triangular level: its base already lies in a tetrahedral part of the clique geometry. Whether this higher-dimensional support persists along long inter-ear corridors is a separate question and will be addressed only computationally in the present paper.
\end{remark}

\subsection{Conjugation symmetry}

Since conjugation is an automorphism of $G_n$, all support constructions inherit a natural symmetry.

\begin{proposition}
\label{prop:support-conj}
Let $\rho,\sigma\in \Rect^\ast(n)$. Then:

\begin{enumerate}
\item
\[
A(\rho^\ast)=A(\rho)^\ast,
\qquad
s(\rho^\ast)=s(\rho)^\ast;
\]

\item
\[
d_{\mathrm{sup}}(\rho,\sigma)
=
d_{\mathrm{sup}}(\rho^\ast,\sigma^\ast);
\]

\item conjugation sends support geodesics from $\rho$ to $\sigma$ bijectively to support geodesics from $\rho^\ast$ to $\sigma^\ast$, and therefore
\[
\Corr_{\mathrm{all}}(\rho^\ast,\sigma^\ast)
=
\Corr_{\mathrm{all}}(\rho,\sigma)^\ast.
\]
\end{enumerate}
\end{proposition}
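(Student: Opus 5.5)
The whole argument rests on one fact: conjugation $\lambda\mapsto\lambda^\ast$ is an involutive automorphism of $G_n$ that maps $\Rect^\ast(n)$ onto itself. I would take the automorphism property as established earlier in the series; the paper already uses it in defining $\mathcal R_n=\mathcal L_n^\ast$ and in the opening sentence of this subsection. The invariance of the rectangular family is immediate, since $(a^b)^\ast=(b^a)$ and the condition $a,b\ge 2$ is symmetric in $a$ and $b$. Consequently conjugation restricts to an automorphism of the induced subgraph $G_n\setminus\Rect^\ast(n)$, so it preserves distances and sends shortest paths to shortest paths there.

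For part (1), I would avoid verifying the explicit formulas for $\alpha(\rho^\ast)$ and $\alpha(\rho)^\ast$, which need not agree termwise. Instead I would use Lemma~\ref{lem:rect-neighbors}, which says $A(\rho)=N(\rho)$ is exactly the neighbourhood of $\rho$. Since conjugation is an automorphism, $N(\rho^\ast)=N(\rho)^\ast$, and hence $A(\rho^\ast)=A(\rho)^\ast$ as unordered pairs. The support edge $s(\rho)$ is the edge joining the two elements of $A(\rho)$. Its image under conjugation is therefore the edge joining the two elements of $A(\rho)^\ast=A(\rho^\ast)$, which is $s(\rho^\ast)$.

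For part (2), write $H=G_n\setminus\Rect^\ast(n)$. Because conjugation is an automorphism of $H$, we have $d_H(u,v)=d_H(u^\ast,v^\ast)$, and as $(u,v)$ ranges over $A(\rho)\times A(\sigma)$ the pair $(u^\ast,v^\ast)$ ranges over $A(\rho^\ast)\times A(\sigma^\ast)$ by part (1). Taking minima gives the equality. The value $\infty$ is preserved as well, because connectivity of endpoints in $H$ is invariant under automorphisms.

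For part (3), a support geodesic from $\rho$ to $\sigma$ is a path in $H$ of length $d_{\mathrm{sup}}(\rho,\sigma)$ with endpoints in $A(\rho)$ and $A(\sigma)$. Its conjugate is a path in $H$ of the same length with endpoints in $A(\rho^\ast)$ and $A(\sigma^\ast)$, and by part (2) this length equals $d_{\mathrm{sup}}(\rho^\ast,\sigma^\ast)$, so the conjugate is a support geodesic from $\rho^\ast$ to $\sigma^\ast$. The map is a bijection because conjugation is an involution, so applying the same argument with $\rho^\ast,\sigma^\ast$ gives the inverse map. Taking unions yields $\Corr_{\mathrm{all}}(\rho^\ast,\sigma^\ast)=\Corr_{\mathrm{all}}(\rho,\sigma)^\ast$. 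The infinite case is consistent, since both sides are then $\varnothing$.

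The only real obstacle is the point already flagged in part (1): one must not match $\alpha$ with $\alpha$ and $\beta$ with $\beta$ under conjugation. For example, $\alpha((a^b))^\ast$ need not equal $\alpha((b^a))$, so the identification is only as unordered pairs. Routing the argument through the neighbourhood characterization from Lemma~\ref{lem:rect-neighbors} avoids this issue entirely.
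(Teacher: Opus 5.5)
Your argument is correct and is essentially the paper's proof: conjugation is an automorphism of $G_n$ that preserves $\Rect^\ast(n)$, so it restricts to an automorphism of $G_n\setminus\Rect^\ast(n)$, and all three parts follow because adjacency, distances, path existence and shortest paths are invariant under that automorphism. You route part~(1) through $A(\rho)=N(\rho)$ and get bijectivity in part~(3) from the involution property, which spells out what the paper asserts in one line; your caution about unordered pairs is also well placed, since in fact $\alpha(\rho)^\ast=\beta(\rho^\ast)$ and $\beta(\rho)^\ast=\alpha(\rho^\ast)$.
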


\begin{proof}
Conjugation is a graph automorphism of $G_n$ and preserves the set $\Rect^\ast(n)$. Therefore it sends each rectangular ear rooted at $\rho$ to the conjugate ear rooted at $\rho^\ast$, each attachment pair to the conjugate attachment pair, and each support edge to the conjugate support edge. It also preserves the graph $G_n\setminus \Rect^\ast(n)$ and therefore preserves both path existence and path nonexistence between attachment pairs. In particular, the value $d_{\mathrm{sup}}(\rho,\sigma)$ is finite if and only if $d_{\mathrm{sup}}(\rho^\ast,\sigma^\ast)$ is finite. When the distance is finite, graph distances and shortest paths are preserved by automorphisms, so the stated equalities follow.
\end{proof}

\subsection{Open structural questions in the proved framework}

The definitions above isolate a family of precise questions.

\begin{problem}
Describe the support zone $\Sigma_n$ as an induced subgraph of $G_n$. In particular, determine whether $\Sigma_n$ is connected, and how its connected components relate to the side framework and to the self-conjugate axis.
\end{problem}

\begin{problem}
Given two rectangular ears rooted at $\rho$ and $\sigma$, determine or estimate the support distance $d_{\mathrm{sup}}(\rho,\sigma)$ in terms of the factor data of $\rho$ and $\sigma$.
\end{problem}

\begin{problem}
Determine which edges of a support corridor lie only in triangles, which lie in tetrahedra, and whether higher-dimensional simplices occur systematically along long rear corridors.
\end{problem}

\begin{problem}
Characterize the transition from side ears to genuine rear ears. In particular, determine whether every support corridor between two genuine rear ears must pass through a region of lower simplex dimension, or whether purely tetrahedral rear corridors can occur.
\end{problem}

\section{Computational atlas of rectangular ears, support zones, and rear corridors}
\label{sec:atlas}

The results proved in the preceding sections isolate several canonical objects of the outer geometry of $G_n$: the boundary framework $\mathcal B_n$, the nontrivial rectangular family $\Rect^\ast(n)$, the associated rectangular ears, and the support structures carried by their attachment pairs and support edges. The purpose of the present section is to illustrate how these objects appear in concrete partition graphs.

The role of the atlas is illustrative rather than foundational. No statement proved earlier depends on the computations recorded here. Rather, the atlas serves three specific purposes:
\begin{enumerate}
\item to visualize the placement of rectangular ears relative to the boundary framework;
\item to display the geometry of support zones and representative support corridors;
\item to compare the newly introduced outer objects with the previously established axis, spine, simplex layers, and degree layers.
\end{enumerate}

Accordingly, the atlas is not intended as an exhaustive data repository, but as a structured collection of representative outer-morphology patterns.

\subsection{Range and visualization scheme}

For the purposes of the present paper, it is sufficient to display moderate ranges of $n$ large enough to exhibit genuine rear ears and inter-ear corridors, yet small enough to remain visually interpretable when the full graph is used.

In the computational material reported here, we use two complementary ranges. For the full-graph atlas, where support zones and support corridors are studied globally, we use
\[
n\in\{8,9,10,12\}.
\]
For the larger local/divisor atlas, where only rectangular roots and their explicit support neighborhoods are needed, we use
\[
n\in\{60,64,72,81\}.
\]

The first range was chosen because it is small enough for full-graph inspection but already exhibits side ears, genuine rear ears, and nontrivial support-corridor behavior. The second range was chosen because these values have richer divisor structure, including both square and nonsquare cases, and therefore produce a more varied family of rectangular roots while still allowing completely explicit local analysis.

For each selected value of $n$, the atlas compares some or all of the following highlighted structures:
\begin{enumerate}
\item the boundary framework $\mathcal B_n$;
\item the nontrivial rectangular roots $\Rect^\ast(n)$;
\item the support edges $s(\rho)$ for $\rho\in\Rect^\ast(n)$;
\item the support zone
\[
\Sigma_n=G_n\Big[\bigcup_{\rho\in\Rect^\ast(n)}A(\rho)\Big];
\]
\item representative support geodesics between distinct ears;
\item selected overlays with $Ax_n$, $Sp_n$, degree layers, or simplex layers.
\end{enumerate}

The computational data were obtained directly from the partition graphs generated by the unit-transfer adjacency rule. For the full-graph range $n\in\{8,9,10,12\}$, we constructed $G_n$, computed the local simplex dimension of a vertex from the size of the largest clique containing it, and evaluated support distances in the graph $G_n\setminus \Rect^\ast(n)$ by shortest-path search between the relevant attachment pairs. For the larger local/divisor range $n\in\{60,64,72,81\}$, no full graph was needed: the rectangular roots and their local ear data were obtained directly from the explicit formulas of Section~\ref{sec:rectangular-ears}.

Each displayed table is included to support a concrete structural point. We do not aim at a full visual census of all partitions and all paths.

\subsection{Rectangular ears relative to the framework}

The first full-graph layer of the atlas records the graph-theoretic data associated with the boundary framework $\mathcal B_n$, the roots of all nontrivial rectangular ears, and the corresponding support edges. This layer is used to make the following points explicit.

\begin{enumerate}
\item The weak rectangular contour
\[
RC_n^{\mathrm{wk}}=\Rect^\ast(n)
\]
appears as a sparse family of marker vertices rather than as a connected rear chain.

\item The only rectangular roots lying on the boundary framework are the side-framework intersections
\[
(n/2,n/2)\in \mathcal L_n,
\qquad
(2^{n/2})\in \mathcal R_n,
\]
which occur only when $n$ is even and coincide only in the degenerate case $n=4$.

\item All remaining rectangular roots lie strictly off the framework and therefore represent genuinely rear ears.

\item Even at the visual level, the support edges $s(\rho)$ organize the rectangular family more effectively than the roots alone.
\end{enumerate}

A compact summary table for the first atlas range is given below.

\begin{table}[ht]
\centering
\caption{Rectangular ears and support structures in the full-graph atlas}
\label{tab:atlas-ears}
\begin{adjustbox}{max width=\textwidth}
\begin{tabular}{ccccccccc}
\toprule
$n$ & $\rho$ & type & $\alpha(\rho)$ & $\beta(\rho)$ & $\dim_{\mathrm{loc}}(\rho)$ & $\dim_{\mathrm{loc}}(\alpha)$ & $\dim_{\mathrm{loc}}(\beta)$ & remarks \\
\midrule
8  & $(4,4)$         & side ear         & $(5,3)$                 & $(4,3,1)$                 & 2 & 2 & 3 & framework \\
8  & $(2^4)$         & side ear         & $(3,2,2,1)$             & $(2,2,2,1,1)$             & 2 & 3 & 2 & conjugate of $(4,4)$ \\
9  & $(3^3)$         & genuine rear ear & $(4,3,2)$               & $(3,3,2,1)$               & 2 & 3 & 3 & self-conjugate \\
10 & $(5,5)$         & side ear         & $(6,4)$                 & $(5,4,1)$                 & 2 & 2 & 3 & framework \\
10 & $(2^5)$         & side ear         & $(3,2,2,2,1)$           & $(2,2,2,2,1,1)$           & 2 & 3 & 2 & framework \\
12 & $(6,6)$         & side ear         & $(7,5)$                 & $(6,5,1)$                 & 2 & 2 & 3 & framework \\
12 & $(4^3)$         & genuine rear ear & $(5,4,3)$               & $(4,4,3,1)$               & 2 & 3 & 3 & rear \\
12 & $(3^4)$         & genuine rear ear & $(4,3,3,2)$             & $(3,3,3,2,1)$             & 2 & 3 & 3 & rear \\
12 & $(2^6)$         & side ear         & $(3,2,2,2,2,1)$         & $(2,2,2,2,2,1,1)$         & 2 & 3 & 2 & framework \\
\bottomrule
\end{tabular}
\end{adjustbox}
\end{table}

\paragraph{Sparse rear markers.}
The computed full-graph data for $n=8,9,10,12$ confirm that the weak rectangular contour
\[
RC_n^{\mathrm{wk}}=\Rect^\ast(n)
\]
appears as a sparse family of marker vertices rather than as a path-like contour. Even when several nontrivial rectangular roots are present, as in the case $n=12$, they do not organize themselves into a direct rear chain at the level of roots alone.

\paragraph{Side ears versus genuine rear ears.}
In the computed examples, the distinction between side ears and genuine rear ears is already visible at the global level. For $n=8$ and $n=10$, only side ears occur; for $n=9$, the unique nontrivial rectangular root $(3^3)$ is a self-conjugate genuine rear ear; and for $n=12$, two side ears coexist with two genuinely rear ears. Thus the rectangular family is not uniformly rear: in the even case, its two extremal members mediate between the side framework and the genuinely rear regime.

\paragraph{From roots to support geometry.}
The computed examples suggest that the first nontrivial rear organization does not appear at the level of the roots $\rho\in\Rect^\ast(n)$ themselves, but only after one passes to their support edges $s(\rho)$. This becomes especially clear for $n=12$, where the roots remain sparse while the support edges already begin to organize a visible rear support pattern.

\begin{remark}
Already at this stage, the tabulated full-graph data make clear that the roots $\rho\in\Rect^\ast(n)$ themselves are too sparse to form a geometric contour in the literal graph-theoretic sense. The first nontrivial rear organization arises only after one passes to support edges and support zones.
\end{remark}

\subsection{Local support geometry}

The second computational layer focuses on the immediate neighborhood of a rectangular ear. The simplest useful local object consists of the rooted ear itself, its triangular closure, and, in the genuine rear case $a,b\ge 3$, the explicit tetrahedral reinforcement provided by Proposition~\ref{prop:rect-tetra}.

For a rectangular root
\[
\rho=(a^b),\qquad a,b\ge 3,
\]
the relevant local configuration consists of the vertices
\[
\rho,\ \alpha(\rho),\ \beta(\rho),\ \gamma_1(\rho),\ \gamma_2(\rho),
\]
with the support edge $s(\rho)=\alpha(\rho)\beta(\rho)$ clearly distinguished from the two edges incident to $\rho$.

This local package separates three levels of structure:
\begin{enumerate}
\item the \emph{ear itself}, i.e. the rooted $V$-configuration at $\rho$;
\item its triangular closure $T(\rho)$;
\item the tetrahedral support around $s(\rho)$ in the genuine rear case.
\end{enumerate}

\paragraph{Local tetrahedral reinforcement.}
The computed local configurations around representative roots $\rho=(a^b)$ with $a,b\ge 3$ provide a geometric illustration of Proposition~\ref{prop:rect-tetra}: the ear itself is a rooted $V$-configuration, its triangular closure is immediate, and the support edge sits inside a tetrahedral neighborhood. In the computed range, this is already visible for $(3^3)$, $(4^3)$, and $(3^4)$, whereas the support edges of the side ears remain only triangular.

This local geometry shows that the rectangular family is more than a visually distinguished subset of partitions: it carries a rigid and explicitly describable support structure.

\subsection{Divisor indexing and local ear geometry for larger \texorpdfstring{$n$}{n}}
\label{sec:large-local-atlas}

The small full-graph atlas for $n=8,9,10,12$ is sufficient to display the basic corridor phenomena, but it is not the only useful computational layer for the present paper. Since the local structure of a rectangular ear is completely explicit, one can also study the rectangular family for substantially larger values of $n$ without constructing the full graph $G_n$.

This second computational layer is divisor-theoretic rather than global. Its purpose is to illustrate the arithmetic organization of the rectangular family and to display the local ear geometry for composite numbers with richer divisor structure.

\paragraph{Divisor-theoretic indexing.}
For every divisor $d\mid n$, the partition
\[
\lambda_d:=((n/d)^d)
\]
is rectangular, and conjugation interchanges $d$ and $n/d$. In this dictionary, the two trivial divisors $1$ and $n$ correspond to the two antennas
\[
(n),\qquad (1^n),
\]
while the nontrivial divisors correspond to the nontrivial rectangular roots. In the even case, the divisor pair $2,n/2$ yields exactly the two side-ear roots
\[
(n/2,n/2)\in\mathcal L_n,
\qquad
(2^{n/2})\in\mathcal R_n.
\]
Thus the rectangular family is naturally indexed by the divisor structure of $n$, with conjugation acting as divisor inversion.

To illustrate this arithmetic organization, we record the rectangular roots up to conjugation for the larger values
\[
n\in\{60,64,72,81\}.
\]

\begin{table}[ht]
\centering
\caption{Rectangular roots up to conjugation for selected large $n$}
\label{tab:large-rect-roots}
\small
\begin{adjustbox}{max width=\textwidth}
\begin{tabular}{ccccccc}
\toprule
$n$ & $d$ & $n/d$ & $\rho=((n/d)^d)$ & type & tetrahedral? & remarks \\
\midrule
60 & 2 & 30 & $(30,30)$   & side ear                & no  & framework intersection \\
60 & 3 & 20 & $(20^3)$     & genuine rear ear        & yes & conjugate to $(3^{20})$ \\
60 & 4 & 15 & $(15^4)$     & genuine rear ear        & yes & conjugate to $(4^{15})$ \\
60 & 5 & 12 & $(12^5)$     & genuine rear ear        & yes & conjugate to $(5^{12})$ \\
60 & 6 & 10 & $(10^6)$     & genuine rear ear        & yes & paired with $(6^{10})$ \\
\midrule
64 & 2 & 32 & $(32,32)$    & side ear                & no  & framework intersection \\
64 & 4 & 16 & $(16^4)$     & genuine rear ear        & yes & conjugate to $(4^{16})$ \\
64 & 8 & 8  & $(8^8)$      & self-conjugate rear ear & yes & square root \\
\midrule
72 & 2 & 36 & $(36,36)$    & side ear                & no  & framework intersection \\
72 & 3 & 24 & $(24^3)$     & genuine rear ear        & yes & conjugate to $(3^{24})$ \\
72 & 4 & 18 & $(18^4)$     & genuine rear ear        & yes & conjugate to $(4^{18})$ \\
72 & 6 & 12 & $(12^6)$     & genuine rear ear        & yes & conjugate to $(6^{12})$ \\
72 & 8 & 9  & $(9^8)$      & genuine rear ear        & yes & conjugate to $(8^9)$ \\
\midrule
81 & 3 & 27 & $(27^3)$     & genuine rear ear        & yes & conjugate to $(3^{27})$ \\
81 & 9 & 9  & $(9^9)$      & self-conjugate rear ear & yes & square root \\
\bottomrule
\end{tabular}
\end{adjustbox}
\end{table}

\paragraph{Antennas and ears in the divisor picture.}
Table~\ref{tab:large-rect-roots} makes visible a simple but structurally useful contrast. The trivial divisors $1$ and $n$ correspond to the two antennas, which lie at the extreme front of the graph, whereas the nontrivial divisors correspond to the rectangular roots, which form a rear marker family. Thus antennas and ears are geometrically far apart but arithmetically belong to the same divisor-indexed family. In the even case, the pair $2,n/2$ plays a special transition role by producing exactly the two side-ear roots on the boundary framework.

The local support structure for these larger examples is completely explicit and follows uniformly from the formulas of Section~\ref{sec:rectangular-ears}. In all displayed genuine rear cases, the same pattern persists: degree $2$, a unique triangular closure, and tetrahedral reinforcement of the support edge.

\paragraph{Local stability at larger scale.}
The larger examples reinforce the local picture proved in Section~\ref{sec:rectangular-ears}. The side roots continue to behave as degenerate boundary-transition ears, while every genuinely rear rectangular root in the displayed range carries the same rigid support pattern: degree $2$, a unique triangular closure, and tetrahedral reinforcement of the support edge.

\paragraph{Arithmetic richness versus geometric sparsity.}
The numbers $60$ and $72$ have comparatively rich divisor structure, and correspondingly produce several distinct rectangular roots up to conjugation. Nevertheless, the rectangular family remains sparse even in these cases. Thus an increase in arithmetic richness enlarges the rear marker family, but does not turn it into a dense rear contour at the level of roots alone.

\subsection{Support zones and representative corridors}

The third computational layer shifts from individual ears to interactions between distinct ears.

The natural object here is the support zone
\[
\Sigma_n=G_n\Big[\bigcup_{\rho\in \Rect^\ast(n)}A(\rho)\Big],
\]
together with selected support geodesics between distinct ears.

For the purposes of the present atlas, a representative support-corridor record consists of:
\begin{enumerate}
\item two distinct rectangular ears rooted at $\rho$ and $\sigma$;
\item the support edges $s(\rho)$ and $s(\sigma)$;
\item one chosen support corridor between them;
\item when feasible, a distinction between corridor edges that lie only in triangles and corridor edges that already lie in tetrahedra or higher cliques.
\end{enumerate}

Representative support-corridor data for the first atlas range is recorded in the following table. Here the column ``simplex profile'' gives a coarse qualitative summary of the corridor: ``tetrahedral'' means that every corridor edge already lies in a $4$-clique, ``mixed'' means that different corridor edges lie in cliques of different sizes within the triangular/tetrahedral range, and ``mixed+higher'' indicates that at least one corridor edge lies in a clique of size at least $5$. The remarks column records the corresponding 	extit{edge-clique profile}, namely the sequence of local clique numbers of the successive corridor edges, where each entry is the size of the largest clique containing that edge.

\begin{table}[ht]
\centering
\caption{Representative support-corridor data}
\label{tab:atlas-corridors}
\small
\begin{adjustbox}{max width=\textwidth}
\begin{tabular}{cccccccc}
\toprule
$n$ & $\rho$ & $\sigma$ & $d_{\mathrm{sup}}(\rho,\sigma)$ & chosen endpoints & corridor length & simplex profile & remarks \\
\midrule
8  & $(4,4)$   & $(2^4)$   & 2 & $(4,3,1)\leftrightarrow(3,2,2,1)$ & 2 & mixed & edge profile $4,3$ \\
10 & $(5,5)$   & $(2^5)$   & 4 & $(5,4,1)\leftrightarrow(3,2,2,2,1)$ & 4 & mixed & edge profile $3,4,4,4$ \\
12 & $(6,6)$   & $(4^3)$   & 2 & $(6,5,1)\leftrightarrow(5,4,3)$     & 2 & mixed & edge profile $3,4$ \\
12 & $(6,6)$   & $(3^4)$   & 4 & $(6,5,1)\leftrightarrow(4,3,3,2)$   & 4 & mixed+higher & edge profile $3,4,5,4$ \\
12 & $(6,6)$   & $(2^6)$   & 6 & $(6,5,1)\leftrightarrow(3,2,2,2,2,1)$ & 6 & mixed+higher & edge profile $3,4,4,5,4,4$ \\
12 & $(4^3)$   & $(3^4)$   & 1 & $(4,4,3,1)\leftrightarrow(4,3,3,2)$ & 1 & tetrahedral & edge profile $4$ \\
12 & $(4^3)$   & $(2^6)$   & 4 & $(4,4,3,1)\leftrightarrow(3,2,2,2,2,1)$ & 4 & mixed & edge profile $4,4,4,3$ \\
12 & $(3^4)$   & $(2^6)$   & 2 & $(3,3,3,2,1)\leftrightarrow(3,2,2,2,2,1)$ & 2 & mixed & edge profile $4,3$ \\
\bottomrule
\end{tabular}
\end{adjustbox}
\end{table}

It is also useful to record the basic size and connectivity data of the support zone itself.

\begin{table}[ht]
\centering
\caption{Support-zone size and connectivity}
\label{tab:sigma-connectivity}
\begin{tabular}{ccccc}
\toprule
$n$ & $|\Rect^\ast(n)|$ & $|V(\Sigma_n)|$ & number of components of $\Sigma_n$ & component sizes \\
\midrule
8  & 2 & 4 & 2 & $2,2$ \\
9  & 1 & 2 & 1 & $2$ \\
10 & 2 & 4 & 2 & $2,2$ \\
12 & 4 & 8 & 3 & $4,2,2$ \\
\bottomrule
\end{tabular}
\end{table}

\paragraph{Mixed simplex behavior along corridors.}
The representative support corridors displayed here are not simplex-theoretically uniform. For instance, in $G_{12}$ the corridor between the side ear $(6,6)$ and the genuine rear ear $(3^4)$ exhibits the edge-clique profile
\[
3,4,5,4,
\]
while the corridor from $(6,6)$ to $(2^6)$ exhibits
\[
3,4,4,5,4,4.
\]
Thus the support-corridor geometry is typically mixed, and in the present range it may even enter clique levels higher than the tetrahedral one.

\paragraph{A purely tetrahedral rear corridor.}
At the same time, the atlas also shows that purely tetrahedral rear communication can occur. In $G_{12}$, the support distance between the genuinely rear ears rooted at $(4^3)$ and $(3^4)$ is equal to $1$: their support edges are connected by a single edge, and that edge already lies in a tetrahedral configuration. Thus rear--rear communication need not pass through a lower-dimensional bottleneck.

\paragraph{The role of the support zone.}
The computed examples suggest that the support zone $\Sigma_n$ captures a substantial part of the rear support organization, but not yet a canonical connected rear band. In particular, $\Sigma_8$ and $\Sigma_{10}$ each split into two components, while $\Sigma_{12}$ splits into three. Thus $\Sigma_n$ is already a meaningful intermediate object, but at least in the present range it is not connected in general.

\subsection{Comparison with the established stratifications}

The final computational comparison places the support structures back into the previously established internal morphology of the graph.

The most useful comparisons with the established stratifications are the following:
\begin{enumerate}
\item $\Rect^\ast(n)$ together with the degree layers $D_d(n)$;
\item $\Rect^\ast(n)$ together with the simplex layers $L_r(n)$;
\item support edges and support corridors together with $Ax_n$;
\item support structures together with the thin spine $Sp_n$, when this is visually informative.
\end{enumerate}

These comparisons make the following already established or computationally visible points explicit.

\begin{enumerate}
\item Every rectangular root belongs to
\[
D_2(n)\cap L_2(n).
\]

\item The endpoints of the support edge of every genuine rear ear belong to simplex layers of local simplex dimension at least $3$.

\item The rectangular family intersects the framework only at the even side roots and intersects the self-conjugate axis only at square partitions.

\item The support zone and representative support corridors form a natural interface between the outer marker family and the previously studied inner stratifications of $G_n$.
\end{enumerate}

\paragraph{Outer support geometry versus inner stratifications.}
The overlays with the axis and the previously established inner structures show that the rectangular roots themselves occupy an extreme low-dimensional outer position, while their support structures already enter regions of higher local simplex complexity. In this sense, the support geometry forms an interface between the sparse outer marker family and the more structured internal body of the graph.

This interface is especially relevant for future work on anisotropy, since it begins to reveal how front, side, and rear behavior differ not only visually, but also in the local simplex geometry through which the graph is assembled.

\paragraph{What the atlas does and does not show.}
The computed data suggest that rectangular ears, support edges, and support corridors capture a genuine part of the outer morphology of $G_n$. At the same time, the present computations do not yet identify a canonical rear chain, nor do they provide a complete classification of ear attachments. What they do provide is a stable intermediate language between proved local structure and still-open global rear geometry.

\begin{remark}
The computational atlas is useful for the present paper, but it is not intended to be exhaustive. Its purpose is analytic rather than encyclopedic: each displayed figure and table is included to support a concrete structural point, not merely to display raw data.
\end{remark}

\section{Conclusion and open problems}

We have formalized several canonical pieces of the outer geometry of the partition graph $G_n$. The boundary framework
\[
\mathcal B_n=\mathcal M_n\cup \mathcal L_n\cup \mathcal R_n
\]
provides a front-and-side outer skeleton, while the nontrivial rectangular family
\[
\Rect^\ast(n)=\{(a^b):ab=n,\ a,b\ge 2\}
\]
provides a canonical rear marker family.

The main proved results concern the local structure of rectangular vertices. Every nontrivial rectangular partition $\rho=(a^b)$ has degree $2$, has exactly two explicitly described neighbors, and lies in a unique triangle of $G_n$. This leads to the notions of rectangular ear, attachment pair, and support edge. We also proved that distinct nontrivial rectangular vertices are never adjacent. Thus the weak rectangular contour is not a connected rear contour, but a discrete family of rear markers.

A second proved result concerns the geometry of the support edge. For genuinely rear rectangular ears, namely those with $a,b\ge 3$, the support edge already lies inside a tetrahedral configuration of the clique complex $K_n=\mathrm{Cl}(G_n)$. This yields a first rigorous distinction between side behavior and rear behavior in the outer morphology of $G_n$.

The paper also introduced support zones, support distances, and support corridors as organizing tools for inter-ear communication through the ambient graph. At present, these notions are better understood as a language than as a finished theory. Their global structure remains largely open, and the computational atlas is intended as a first guide to the relevant phenomena rather than as a final classification.

A further feature of the rectangular family is its arithmetic organization. The roots $\rho\in\Rect^\ast(n)$ are naturally indexed by the nontrivial divisors of $n$, while the two antennas correspond to the trivial divisors $1$ and $n$. In the even case, the divisor pair $2,n/2$ yields exactly the two side-ear roots on the boundary framework. Thus the rectangular family is not only a geometric family of rear markers, but also an arithmetic one, linking the outer combinatorial morphology of $G_n$ with the divisor structure of the underlying integer.

Several natural problems remain open.

\begin{enumerate}
\item Is there a canonical rear contour or rear chain that improves on the weak and strong rectangular contours while remaining intrinsic and conjugation-invariant?

\item What is the global structure of the support zone $\Sigma_n$? Is it connected, and if not, how do its components sit relative to the framework and the axis?

\item How large can support distances between distinct ears become, and how do they depend on the factor structure of the corresponding rectangles?

\item What simplex profiles occur along support corridors? Do long rear corridors necessarily pass through lower-dimensional regions, or can purely tetrahedral corridors occur?

\item Can the contour-based attachment language of components of $G_n\setminus C_n$ be reconciled with the local support-edge language in a unified theory of outer appendices and rear attachment loci?

\item More broadly, can one develop an intrinsic theory of front, side, and rear regimes in $G_n$ that interacts naturally with axial distance, simplex stratification, degree concentration, and anisotropy?
\end{enumerate}

In this sense, the present paper should be viewed as a first rigorous step toward a combinatorial theory of the outer boundary of the partition graph. It does not yet provide a complete rear theory, but it isolates canonical outer objects, proves a compact theorem package about them, and supplies a language in which the remaining questions can now be posed precisely.

\section*{Acknowledgements}
The author acknowledges the use of ChatGPT (OpenAI) for discussion, structural planning, and editorial assistance during the preparation of this manuscript. All mathematical statements, proofs, computations, and final wording were checked and approved by the author, who takes full responsibility for the contents of the paper.

\end{document}